\newcommand{\nocontentsline}[3]{}
\newcommand{\tocless}[2]{\bgroup\let\addcontentsline=\nocontentsline#1*{#2}\egroup}
\newtheorem{teorema}{Theorem}[section]
\newtheorem*{teoA}{Theorem A}
\newtheorem*{teoB}{Theorem B}
\newtheorem*{teoC}{Theorem C}
\newtheorem{cor}[teorema]{Corollary}
\newtheorem{proposicion}[teorema]{Proposition}
\newtheorem{prop}[teorema]{Proposition}
\theoremstyle{definition}
\newtheorem{defi}[teorema]{Definition}
\newcommand{\E}{\mathcal{E}}
\newcommand{\Sub}{\mathrm{Sub}}
\DeclareMathOperator{\dec}{Dec}
\newcommand{\set}{\mathbf{Set}}
\begin{document}
\renewcommand{\abstractname}{\vspace{-\baselineskip}}

\title[Connectedness through Decidable Quotients]{Connectedness through Decidable Quotients}
\author[E. Ruiz-Hern\'andez]{Enrique Ruiz-Hern\'andez}
\address[E. Ruiz-Hern\'andez]{Centro de Investigaci\'on en Teor\'ia de Categor\'ias y sus Aplicaciones, A.C. M\'e\-xi\-co.}
\curraddr{}
\email{e.ruiz-hernandez@cinvcat.org.mx}

\author[P. Sol\'orzano]{Pedro Sol\'orzano}
\address[P. Sol\'orzano]{Instituto de Matem\'aticas, Universidad Nacional Aut\'onoma de M\'exico, Oaxaca de Ju\'a\-rez, M\'e\-xi\-co.}
\curraddr{}
\email{pedro.solorzano@matem.unam.mx}
\date{\today}
\dedicatory{In memory of William Lawvere}
\begin{abstract}
{\bf R\'esum\'e.} 
En consid\'erant des quotients d\'ecidables, on fournit une condition suffisante (1) pour garantir que la sous-cat\'egorie pleine des objets d\'ecidables d'un topos soit un id\'eal exponentiel et (2) pour que la notion classique de connexit\'e pour un objet X co\"incide avec $\Pi X=1$, o\`u $\Pi$ est le foncteur adjoint \`a gauche de l'inclusion des d\'ecidables.\\

L'ajout de cette condition-ci dans le contexte de l'axiomatique de McLarty pour la G\'eom\'etrie Diff\'erentielle Synth\'etique rend tout topos qui la satisfait pr\'ecoh\'esif sur le topos de ses objets d\'ecidables. Une r\'eciproque est \'egalement fournie.\\

{\bf Abstract.} 
By looking at decidable quotients, a sufficient condition is provided to guarantee that (1) the full subcategory of decidable objects of a topos is an exponential ideal and that (2) the classical notion of connectedness for an object $X$ coincides with $\Pi X=1$, where $\Pi$ is the left-adjoint functor of the inclusion of the decidable objects.\\

The addition of this condition to McLarty's axiomatic set up for Synthetic Differential Geometry makes any topos that satisfies it precohesive over the topos of its decidable objects. A converse is also provided. \\
\end{abstract}
\subjclass[2020]{Primary 18B25; Secondary 03G30, 03B38}
\keywords{Topos, connectedness, precohesion}

\maketitle

\newpage
\section*{Motivation}

Colin McLarty \cite{MR0925615} formalizes a development of the notion of set out of that of space through topos theory: he considers a topos of \emph{spaces} from which it is possible to get a category of sets, a topos of sets. In that paper, he posits two axiom systems, one for the topos of sets and another for the topos of spaces, which he denotes $\mathrm{SDG}$---for several of the postulates pertain specifically to the topic of Synthetic Differential Geometry. There are two postulates in this latter system, $\mathrm{SDG}_6$ and $\mathrm{SDG}_7$ which he further studies in \cite{MR0877866}. There they are presented as
\begin{center}
\begin{minipage}{8.5cm}
\hspace{-.8cm}(NS) Any object is either initial or has a global element, 
\end{minipage}
\end{center}
and
\begin{center}
\begin{minipage}{8.5cm}
\hspace{-1.3cm}(DSO) There exists a unique decidable subobject for any given object containing all of its points.
\end{minipage}
\end{center}
In McLarty's words, intuitively, the Nullstellensatz (NS) ``says points are the smallest spaces, so two points in any space are either wholly coincident or wholly disjoint''; that is, ``for every space $B$ and points $b_1\in B$ and $b_2\in B$, `$b_1=b_2\vee\neg(b_1=b_2)$' is true even if $B$ is not discrete (decidable) and the corresponding sentence with variables over $B$ is not true'' (\cite[p. 81]{MR0925615}). 

Toposes satisfying NS abound: Any topos $\E$ precohesive over a topos $\mathcal S$ that satisfies NS must also satisfy NS---thus any topos that is precohesive over $\set$ satisfies NS. Indeed, by Lawvere's Nullstellensatz, $f_!(X)$ is initial if $f_*(X)$ is initial. In such a case, by the strictness of $0$, $X$ would be initial too.  Hence $f_*(X)$ is not initial whenever $X$ is not initial either. 

These two axioms imply that the topos of spaces we begin with has a local geometric morphism to a category of---decidable---sets, as proved in \cite{MR0877866}. There is just one functor missing to aspire to have precohesion in the sense of William Lawvere's program \cite{MR2369017}.  
\section*{Main Results}
The purpose of this communication is to ultimately focus in such a functor, first in isolation, and then in the precohesive context.  To this effect, proceeding \`a la McLarty, consider the following postulate:
\begin{center}
\begin{minipage}{9.5cm}
\hspace{-1.3cm}(DQO) There exists a unique decidable quotient for any given object along which any arrow to $2$ factors uniquely. 
\end{minipage}
\end{center}
In the presence of DQO, let $p_X:X\to \Pi(X)$ be the corresponding quotient map. It follows that $\Pi 1\cong 1$, $\Pi X=0$ if and only if $X=0$, and  $\Pi\Pi X\cong\Pi X$ for every object $X\in\E$.  

It is not immediately apparent that DQO implies that $\Pi$ is indeed the object part of a left adjoint of the inclusion of $\dec(\E)$ in $\E$.  This is actually the case, as established by the following result. 

\begin{teoA}\hypertarget{T:TeorA} For a topos $\E$ that satisfies  {\em NS}  and {\em DQO}, $\Pi$ is the object part of a finite-product-preserving functor left adjoint to the inclusion of $\dec(\mathcal E)$.\end{teoA}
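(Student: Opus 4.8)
The plan is to reduce everything to a single strengthening of the hypothesis {\em DQO}, namely that the universal property of the quotient $p_X\colon X\to\Pi X$ holds not merely against $2$ but against every decidable object: for each $X\in\E$ and each $D\in\dec(\E)$, every arrow $X\to D$ factors uniquely through $p_X$. Granting this, the theorem is essentially formal. Indeed the statement says exactly that $p_X$ is a universal arrow from $X$ to the inclusion $\dec(\E)\hookrightarrow\E$, so the assignment $X\mapsto\Pi X$ extends uniquely to a functor left adjoint to the inclusion, with the $p_X$ as unit; functoriality of $\Pi$ and naturality of $p$ then follow from the uniqueness clause. Thus only two points require work: the strengthened universal property, and preservation of finite products.

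For the strengthened universal property I would argue as follows. Since $\E$ is a topos, $p_X$ is a regular epimorphism, hence the coequalizer of its kernel pair $R\rightrightarrows X$; so an arrow $g\colon X\to D$ factors (necessarily uniquely) through $p_X$ as soon as it coequalizes $R$. Writing $\eq_D\colon D\times D\to 2$ for the classifying map of the (complemented) diagonal of the decidable object $D$, the relation $E_g:=(g\times g)^{-1}(\Delta_D)$ is a complemented subobject of $X\times X$, and $g$ coequalizes $R$ precisely when $R\le E_g$, equivalently when $R\wedge\neg E_g=0$. Here is where {\em NS} enters: were $R\wedge\neg E_g$ non-initial it would have a global element, i.e.\ points $x_1,x_2\colon 1\to X$ with $p_Xx_1=p_Xx_2$ yet $\eq_D(gx_1,gx_2)=\bot$. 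Testing against the arrow $\chi:=\eq_D\circ\langle gx_1\circ{!},1_D\rangle\colon D\to 2$, which separates $gx_1$ from $gx_2$, and applying {\em DQO} to $\chi g\colon X\to 2$ yields
\[
\chi(gx_1)=(\chi g)(x_1)=\overline{\chi g}(p_Xx_1)=\overline{\chi g}(p_Xx_2)=(\chi g)(x_2)=\chi(gx_2),
\]
contradicting $\chi(gx_1)=\top\neq\bot=\chi(gx_2)$. Hence $R\wedge\neg E_g=0$ and $g$ factors through $p_X$.

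For finite products, $\Pi 1\cong 1$ is already recorded, so it remains to see that the canonical comparison $\Pi(X\times Y)\to\Pi X\times\Pi Y$ is invertible. My plan is not to build an inverse by hand but to recognize $p_X\times p_Y\colon X\times Y\to\Pi X\times\Pi Y$ as \emph{a} decidable quotient and then appeal to the uniqueness clause of {\em DQO}. Its codomain is decidable (a product of decidables), and it is an epimorphism (a product of epimorphisms, using that $-\times Y$ and $\Pi X\times-$ have right adjoints); so I only need that every $\chi\colon X\times Y\to 2$ factors uniquely through it. As above this amounts to $\chi$ coequalizing the kernel pair $R_X\times R_Y$, with legs $s,t$, and again {\em NS} reduces the vanishing of the locus where $\chi s\neq\chi t$ to a statement about global points: given $x,x'\colon 1\to X$ and $y,y'\colon 1\to Y$ with $p_Xx=p_Xx'$ and $p_Yy=p_Yy'$, moving one coordinate at a time and applying {\em DQO} in each variable separately gives $\chi(x,y)=\chi(x',y)=\chi(x',y')$, so no such global point exists. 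Thus $p_X\times p_Y$ satisfies the defining property of {\em DQO}, and its uniqueness identifies it with $p_{X\times Y}$, i.e.\ $\Pi(X\times Y)\cong\Pi X\times\Pi Y$ compatibly with the projections.

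The step I expect to be the crux is the passage, inside the strengthened universal property, from ``maps to $2$ factor through $p_X$'' to ``maps to an arbitrary decidable $D$ factor through $p_X$''. One cannot simply embed $D$ into a power of $2$ and separate by hand: maps to $2$ do \emph{not} in general separate the points of a decidable object---this already fails in the topos of $G$-sets, where every object is decidable but a transitive $G/H$ admits only constant maps to $2$. What rescues the argument is precisely {\em NS}: it lets me replace the global, constructively delicate inequality $R\le E_g$ by the mere absence of a \emph{global} counterexample, and two distinct global points of a decidable object are always separated by a map to $2$. Decidability of $2$ itself, through $\top\neq\bot$, is what turns that separation into the final contradiction.
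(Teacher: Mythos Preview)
Your argument is correct and takes a genuinely different route from the paper's. The paper introduces an auxiliary internal-logic notion of \emph{pneumoconnected fibers} and proves a ``Fiber Pneumoconnectedness Lemma'' establishing that for an epic $q:X\twoheadrightarrow Q$ under NS, the three conditions (arrows to $2$ factor through $q$) $\Leftrightarrow$ ($q$ has pneumoconnected fibers) $\Leftrightarrow$ (arrows to any decidable $Y$ factor through $q$) are equivalent; Theorem~A then follows by quoting this lemma, and product preservation is obtained by first showing that pneumoconnected fibers are stable under products of arrows. You bypass this machinery entirely: your strengthened universal property comes from a direct kernel-pair argument, using NS to reduce $R\wedge E_g^c=0$ to the absence of a global counterexample and then separating two distinct global points of a decidable $D$ by a single map $D\to 2$; your product argument is likewise a direct kernel-pair computation on $p_X\times p_Y$, using DQO one coordinate at a time on global points. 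What your approach buys is economy: no new internal formula, no intermediate lemma. What the paper's approach buys is a reusable equivalence (the Fiber Pneumoconnectedness Lemma) that is applied elsewhere in the paper, e.g.\ to the $\neg\neg$-separated reflection and to the characterisation of when $\dec(\E)$ is a topos; your argument, being tailored to $p_X$, does not yield these by-products.
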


Conversely, by \ref{T:DecReflexImpliesDQO}, DQO holds in the presence of NS as soon as the inclusion $\dec(\E)$ into $\E$ has a left adjoint. Therefore,
 
\begin{teoB}\hypertarget{T:TeorB} For a topos $\E$ that satisfies  {\em NS} , $\dec(\E)$ is an exponential ideal as soon as it is reflective.
\end{teoB}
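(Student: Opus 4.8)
The plan is to deduce the statement from Theorem A together with the converse recorded in \ref{T:DecReflexImpliesDQO}, and from the general fact that a reflective subcategory of a cartesian closed category whose reflector preserves finite products is automatically an exponential ideal. First I would invoke \ref{T:DecReflexImpliesDQO}: since $\E$ satisfies NS and, by hypothesis, the inclusion $i\colon\dec(\E)\hookrightarrow\E$ admits a left adjoint, the postulate DQO holds in $\E$. Theorem A then applies and yields that the reflector $\Pi\dashv i$ preserves finite products, with reflection map $p_X\colon X\to\Pi X$. As every topos is cartesian closed, the exponential $D^X$ exists in $\E$ for each decidable $D$ and arbitrary $X$, so the whole statement reduces to showing $D^X\in\dec(\E)$.

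Next I would prove exactly this. Recall that a reflective subcategory coincides with the full subcategory of objects orthogonal to all the reflection maps, so it suffices to check that $D^X$ is orthogonal to every $p_C$; equivalently, that $p_{D^X}\colon D^X\to\Pi(D^X)$ is an isomorphism. By the Yoneda lemma it is enough to exhibit, naturally in $C\in\E$, a bijection $\E(\Pi C,D^X)\cong\E(C,D^X)$ realized by precomposition with $p_C$. I would obtain it by chaining canonical isomorphisms: the exponential adjunction turns the two sides into $\E(\Pi C\times X,D)$ and $\E(C\times X,D)$; since $D$ is decidable it is local for the reflection, so these become $\E(\Pi(\Pi C\times X),D)$ and $\E(\Pi(C\times X),D)$; and finally product preservation of $\Pi$, together with $\Pi\Pi C\cong\Pi C$ (already noted in the discussion of DQO), identifies both with $\E\bigl(\Pi C\times\Pi X,D\bigr)$.

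The one point demanding care---the main obstacle---is not the existence of these isomorphisms but their compatibility: one must verify that the resulting bijection is genuinely the map induced by $p_C$ and is natural in $C$, so that the Yoneda argument produces an actual inverse to $p_{D^X}$ rather than merely an abstract bijection of hom-sets. Concretely, this amounts to checking that $\Pi(p_C\times\mathrm{id}_X)$ corresponds, under product preservation, to $\Pi p_C\times\mathrm{id}_{\Pi X}$ with $\Pi p_C$ invertible. Once this bookkeeping is settled, $p_{D^X}$ is an isomorphism, $D^X$ lies in $\dec(\E)$, and hence $\dec(\E)$ is an exponential ideal. I expect all the genuine content to reside in Theorem A and \ref{T:DecReflexImpliesDQO}, the present deduction being the formal step that a finite-product-preserving reflection onto a subcategory of a cartesian closed category is an exponential ideal.
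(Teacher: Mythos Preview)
Your proposal is correct and follows exactly the route the paper indicates: invoke \ref{T:DecReflexImpliesDQO} to obtain DQO from NS and reflectivity, then apply Theorem~A to get a finite-product-preserving reflector, and conclude via the standard fact that such a reflector makes the reflective subcategory an exponential ideal. The paper leaves this last implication implicit, whereas you spell out the Yoneda argument; the only extra care you flag---checking that the composite bijection is the one induced by $p_C$---is routine naturality bookkeeping and poses no real obstacle.
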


It is not immediately apparent that a topos $\E$ precohesive over $\dec(\E)$ satisfies McLarty's DSO. Section \ref{S:preco} is devoted to establishing this. It should be noted that the proof requires DQO.  Hence, 
\begin{teoC}\hypertarget{T:TeorC} Let $\mathcal E$ be a topos satisfying NS. Then $\E$ satisfies {\em DQO} and McLarty's {\em DSO} if and only if $\E$ is precohesive over $\dec(\E)$. 
\end{teoC}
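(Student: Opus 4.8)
The plan is to prove the two implications separately, taking Theorem A, \ref{T:DecReflexImpliesDQO}, and McLarty's local-geometric-morphism theorem \cite{MR0877866} as given, so that the genuinely new content is confined to reconciling DQO and DSO with the standard precohesion data.

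For the implication that NS, DQO and DSO force precohesion, I would first invoke Theorem A: under NS and DQO the functor $\Pi$ is a finite-product-preserving left adjoint to the inclusion $i\colon\dec(\E)\hookrightarrow\E$. Setting $p_!=\Pi$ and $p^*=i$, this already supplies the essential, connected and product-preserving part of the structure, with $p^*$ fully faithful since it is an inclusion. Independently, NS together with DSO yields by \cite{MR0877866} a local geometric morphism $\E\to\dec(\E)$, i.e.\ the further adjoints $p^*\dashv p_*\dashv p^!$ with $p^!$ fully faithful. Concatenating the two adjoint strings produces the quadruple $p_!\dashv p^*\dashv p_*\dashv p^!$ required for precohesion. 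The one axiom not delivered outright is the Nullstellensatz for the morphism, namely epimorphy of the canonical points-to-pieces transformation $p_*\to p_!$, which is obtained as $p_!$ applied to the counit $p^*p_*\to\mathrm{id}$ after the identification $p_!p^*\cong\mathrm{id}$; I expect to verify this pointwise from the axiom NS, using that $\Pi X=0$ exactly when $X=0$ to guarantee that every component in $\Pi X$ is hit by a global element of $X$ factoring through $p_*X$.

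For the converse, assume NS and that $\E$ is precohesive over $\dec(\E)$. Precohesion in particular exhibits $i\colon\dec(\E)\hookrightarrow\E$ as reflective, with reflector $p_!$; hence by \ref{T:DecReflexImpliesDQO}, which upgrades reflectivity to DQO under NS, the axiom DQO holds at once. It then remains to recover McLarty's DSO from the precohesive structure together with DQO. Given $X$, the candidate decidable subobject will be carved out of the local data: the counit $i\,p_*X\to X$ receives every global element of $X$, because a point $1\to X$ transposes across $p^*\dashv p_*$ --- here using that $1$ is decidable, so $1\cong p^*1$ --- to a map $1\to p_*X$ whose image under the counit is the original point. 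I would then cut this down using the quotient map $p_X\colon X\to\Pi X$ so as to land inside a subobject of the decidable object $p_*X$; since subobjects of decidable objects are decidable, the resulting subobject of $X$ is decidable and still contains all points, and its uniqueness follows from DQO together with the Nullstellensatz.

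The step I expect to be the main obstacle is precisely this extraction of DSO in the converse direction. The excerpt itself flags that the argument ``requires DQO,'' which I read as the warning that the naive subobject $i\,p_*X\rightarrowtail X$ of points need not be decidable on its own: one must route the construction through the decidable quotient $p_X$ to force decidability and the containment of all points to hold simultaneously, and to pin down uniqueness. By comparison the forward direction is largely an assembly of Theorem A and \cite{MR0877866}, with only the epimorphy of the points-to-pieces map calling for a short argument from NS.
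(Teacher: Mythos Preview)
Your forward direction is fine and matches the paper: Theorem~A plus McLarty's local geometric morphism assemble into the precohesion string, and the remaining axiom (monic counit $f^*f_*\to 1$, which is the form the paper uses) is exactly what DSO supplies, so no separate points-to-pieces verification is needed.

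The gap is in your converse, specifically in how you handle DSO. You have misdiagnosed the difficulty. The candidate subobject $\beta_X\colon f^*f_*X\rightarrowtail X$ is \emph{automatically} decidable, since $f_*X$ lies in $\dec(\E)$ and $f^*$ is the inclusion; there is nothing to ``cut down'' with $p_X$, and your remark that ``the naive subobject $i\,p_*X\rightarrowtail X$ of points need not be decidable on its own'' is simply false. Existence of the DSO subobject is immediate from precohesion.

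The genuine obstacle is \emph{uniqueness}, and this is where DQO is actually used. Given another decidable $g\colon A\rightarrowtail X$ containing all points, one factors $g=\beta_X\circ g'$ with $g'\colon A\rightarrowtail f^*f_*X$ monic; the problem is to show $g'$ is epic. The paper first shows $g'$ is $\neg\neg$-dense via NS (a point of the complement of $A^{\neg\neg}$ in $f^*f_*X$ would be a point of $X$ not in $A$), and then invokes Corollary~\ref{T:TeorD}: since $\dec(\E)$ is a topos, $\Pi$ sends $\neg\neg$-dense arrows to epics, so $f_!g'$ is epic; but $A$ and $f^*f_*X$ are both decidable, whence the unit components $\sigma_A$, $\sigma_{f^*f_*X}$ are isomorphisms and $g'\cong f_!g'$ is epic. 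Your proposal does not contain this mechanism, and ``uniqueness follows from DQO together with the Nullstellensatz'' is not a proof sketch but a restatement of the hypotheses.
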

In an extensive category, an object is connected if it has exactly two complemented subobjects. On the other hand,  \citet{MR2369017} calls an object connected in the context of precohesion if its image under the left-most adjoint is terminal. In this report, for an object $X$, the latter corresponds in a weak sense to $\Pi(X)=1$.  In fact, in the presence of NS and DQO both agree (see \ref{P:connect}).  Notice that to require DQO but not NS is not be enough:  In $\set\times\set$, DQO holds and thus $\Pi(1)=1$, yet $1$ has four complemented subobjects. 

In view of these observations, the addition of DQO is a natural extension of Mc\-Larty's axioms for SDG that frames it in a context of Lawvere's precohesion in which these two notions of connectedness agree. 

\pagebreak

\subsection*{Aknowledgements.} 
Special thanks to Mat\'ias Menni for several comments and remarks greatly improving the presentation of this report.  Also, to the couple of referees that helped the authors clarify their goals. Lastly, to Lilli\'an Bustamante and her staff for their hospitality the day some of the main ideas for this project were conceived. The second named author is supported by the SECIHTI Investigadoras e Investigadores por M\'exico Program Project No. 61.


\section{Connectedness}
An object $X$ is {\em connected} if it has exactly two complemented subobjects.  Let $\Sub_c(X)$ be the collection of complemented subobjects of $X$. These are evidently classified by $2$. The DQO axiom requires the complemented subobjects of $X$ to be in bijective correspondence with those of its decidable quotient $\Pi X$. In fact, 

\begin{proposicion}\label{P:connect} Let $\E$ satisfy NS and DQO. For an object $X$, $\Pi X=1$ if and only if  $\Sub_c(X)$ has exactly two elements, i.e. if $X$ is connected. 
\end{proposicion}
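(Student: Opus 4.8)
The plan is to transport the whole question through the decidable quotient $p_X\colon X\to\Pi X$ and reduce everything to the decidable object $D:=\Pi X$. Since complemented subobjects are classified by $2$, there are natural bijections $\Sub_c(X)\cong\mathrm{Hom}(X,2)$ and $\Sub_c(\Pi X)\cong\mathrm{Hom}(\Pi X,2)$, and the factorization clause of DQO is exactly the assertion that precomposition with $p_X$ is a bijection $\mathrm{Hom}(\Pi X,2)\xrightarrow{\sim}\mathrm{Hom}(X,2)$. Composing these gives $\Sub_c(X)\cong\Sub_c(\Pi X)$, so I would reduce the statement to two facts about decidable objects: that $\Sub_c(1)$ has exactly two elements, and that a connected decidable object is terminal.

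For the first fact I would use NS to show $1$ is connected (it is evidently decidable). If $U\hookrightarrow 1$ is complemented then, by NS, either $U$ is initial, whence $U=0$, or $U$ has a global element $1\to U\hookrightarrow 1$; the composite is the identity of $1$, so the inclusion $U\hookrightarrow 1$ is a split epimorphism and, being monic, an isomorphism, whence $U=1$. By nondegeneracy $0\neq 1$, so $\Sub_c(1)=\{0,1\}$ has exactly two elements. This already settles the forward implication: if $\Pi X\cong 1$ then $\Sub_c(X)\cong\Sub_c(1)$ has two elements.

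For the converse, suppose $\Sub_c(X)$, equivalently $\Sub_c(D)$, has exactly two elements. Then $D\neq 0$ (otherwise $\Sub_c(D)=\Sub_c(0)$ would be a singleton; note $\Pi X=0$ iff $X=0$), so $0$ and $D$ are two distinct complemented subobjects and hence the only ones. By NS, $D$ carries a global element $d\colon 1\to D$, which is monic because every arrow out of the terminal object is. The crucial step is to see that this point is complemented: decidability of $D$ makes the diagonal $D\to D\times D$ a complemented subobject, and pulling it back along $\langle d\circ{!}_D,\mathrm{id}_D\rangle\colon D\to D\times D$ exhibits the equalizer $\{d\}\cong 1$ as a complemented subobject of $D$, since complemented subobjects (being classified by $2$) are stable under pullback. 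As $\{d\}\cong 1\neq 0$, connectedness forces $\{d\}=D$, i.e. $d$ is an isomorphism and $D=\Pi X\cong 1$.

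The step I expect to require the most care is the middle one of the converse: checking that a global element of a decidable object cuts out a complemented subobject. Everything else is a formal consequence of the DQO bijection and of NS, but this is the point at which decidability is genuinely used, and one must verify both that the stated pullback really computes $\{d\}$ and that complementation is inherited by it.
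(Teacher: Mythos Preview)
Your argument is correct, and the forward direction is essentially the paper's. For the converse, however, you take a genuinely different route. The paper observes that if $\Sub_c(X)$ has two elements then $X\neq 0$, so by NS the map $!_X:X\to 1$ is (split) epic; since the only two arrows $X\to 2$ are then the two constants, every arrow $X\to 2$ factors through $!_X$, and the \emph{uniqueness} clause of DQO forces $\Pi X\cong 1$. You instead transport the problem to $D=\Pi X$ and prove the standalone lemma that a connected decidable object is terminal: decidability makes the point $d:1\to D$ a complemented subobject (as the pullback of the complemented diagonal), and connectedness forces $\{d\}=D$. The paper's approach is shorter and exploits DQO directly; yours isolates a reusable fact about decidable objects and makes the role of decidability more explicit, at the cost of the extra verification you flag (that the pullback of the diagonal along $\langle d\,!_D,\mathrm{id}_D\rangle$ really is $d:1\rightarrowtail D$, which it is since each of these monics factors through the other). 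Both arguments implicitly use nondegeneracy to know $0\neq 1$; you state this, the paper leaves it tacit.
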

\proof For the necessity, since $\Pi X=1$ and NS guarantees that $\Sub_c(1)=2$, there are exactly two arrows $X\rightarrow 2$. So, as $X,0\in\Sub_c(X)$, these are all of the complemented subobjects of $X$.  

For the sufficiency, since $X\neq 0$, there is an arrow $1\rightarrow X$. Now, by considering the composite $1\to X\to 1$, it follows that $!_X:X\rightarrow 1$ is epic. Assuming that $\Sub_c(X)$ has exactly two elements, the two arrows $X\rightarrow 2$ are the constants, which factor through $!_X$; thus by DQO, the arrow $\Pi X\rightarrow 1$ is an isomorphism.
\endproof

\begin{proposicion}[Schanuel]
Let $\E$ satisfy NS and DQO.  The finite product of connected objects is connected.
\end{proposicion}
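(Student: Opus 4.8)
The plan is to reduce the statement to the characterization of connectedness already obtained in Proposition \ref{P:connect} together with the product-preservation established in Theorem A. By Proposition \ref{P:connect}, under NS and DQO an object $Z$ is connected precisely when $\Pi Z = 1$; this converts a claim about complemented subobjects of a product into a claim about the value of $\Pi$ on that product, which is far more tractable.

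First I would dispose of the base case. Since $\Pi 1 \cong 1$, the terminal object---the empty product---is connected, settling the nullary case. For the binary case, suppose $X$ and $Y$ are connected, so that $\Pi X \cong 1$ and $\Pi Y \cong 1$ by Proposition \ref{P:connect}. Next, invoking the finite-product-preservation of $\Pi$ from Theorem A, I would compute
\[
\Pi(X \times Y) \cong \Pi X \times \Pi Y \cong 1 \times 1 \cong 1.
\]
Applying the converse direction of Proposition \ref{P:connect} then yields that $X \times Y$ is connected, and an evident induction on the number of factors extends this to any finite product.

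Since the work is almost entirely front-loaded into the two cited results, I do not anticipate a genuine obstacle; the only point demanding care is that ``$\Pi Z = 1$'' must be read up to isomorphism, so that the chain of canonical isomorphisms above legitimately lands in the hypothesis of Proposition \ref{P:connect}. It is worth emphasizing that what classically requires Schanuel's combinatorial argument about the complemented subobjects of a product here collapses to a one-line consequence of the left adjoint $\Pi$ preserving finite products---so the real content of the proposition lives in Theorem A, not in the present deduction.
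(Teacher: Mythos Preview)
Your argument is circular. The finite-product-preservation clause of Theorem~A is established in the paper via Corollary~\ref{T:Pipreservesproducts}, which in turn rests on Proposition~\ref{P:pneumoprodfib}; and the key step in the proof of \ref{P:pneumoprodfib} is the assertion that $f^{-1}(a)\times g^{-1}(b)$ is connected because each factor is---that is, it invokes the very Schanuel proposition you are trying to prove. So ``the real content of the proposition lives in Theorem~A'' is correct, but in the wrong direction: Schanuel is an input to Theorem~A, not a corollary of it.

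The paper therefore gives a direct, self-contained argument at this point in the development. It takes a nontrivial complemented $Z\rightarrowtail X\times Y$, uses NS to pick global points of $Z$ and $Z^c$, and then restricts $Z$ along the ``slice'' maps $\langle 1,b!_X\rangle:X\to X\times Y$ and $\langle c!_Y,1\rangle:Y\to X\times Y$ to obtain complemented subobjects of $X$ and of $Y$; connectedness forces these to be trivial, which produces a point of $Z\cap Z^c$ and hence a contradiction. You need to reproduce something of this shape (or otherwise prove the binary case without appealing to product-preservation of $\Pi$) before the induction you sketch can get off the ground.
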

\begin{proof}
The argument syntactically coincides with that of \cite[Theorem 12.1.1]{RRZ2004}, as expected since the Nullstellensatz therein coincides with NS.  

Let $Z\rightarrowtail X\times Y$ be complemented in $X\times Y$ and different from $\varnothing$ and $X\times Y$. By NS, there are points $\langle a,b\rangle:1\to X\times Y$ and $\langle c,d\rangle:1\to X\times Y$ that factor through $Z$ and $Z^c$ respectively.

\newpage

The decomposition given by $Z$ induces a decomposition of $X$ via the map $\langle 1,b!_X\rangle:X\to X\times Y$ and a decomposition of $Y$ via the map $\langle c!_Y,1\rangle:Y\to X\times Y.$ Now, since $X$ and $Y$ are connected, these decompositions are trivial.   Therefore, $\langle 1,b!_X\rangle$ factors through $Z$ and $\langle c!_Y,1\rangle$ factors through $Z^c$.  This implies that $\langle b,c\rangle:1\to X\times Y$ factors through $Z\cap Z^c$. A contradiction. 

Therefore there does not exist a nontrivial complemented subobject of $X\times Y$, and thus the conclusion follows. 
\end{proof}

Before the next definition, let $P_c(X)$ be the subobject of the power object $P(X)$,
\begin{equation}
\{u\in P(X): u\cup u^c=X\}\rightarrowtail P(X),
\end{equation}
 where $u^c$ stands for $\{x\in X: x\notin u\}$. Observe that $\Sub_c(X)$ is in one-to-one correspondence with $\E(1, P_c(X))$.

Also, recall that in a topos, morphisms can be described as in set theory via their graphs: appropriate subobjects of the product of their domain and codomain (See Exercise VI.11 in the textbook by Mac Lane and Moerdijk \cite{MR1300636}): the subobject
\[
G\rightarrowtail X\times Y
\]
is the graph of an arrow $X\to Y$ if and only if 
\begin{equation}\label{E:graph}
\exists!y(\langle x, y\rangle\in G
\end{equation}
is universally valid for $x\in X$. In particular, for an arrow $f:X\to Y$ one writes $\lvert f\rvert\rightarrowtail X\times Y$ for its graph and $f^{-1}(y)$ instead of $\{x\in X: \langle x, y\rangle\in\lvert f\rvert\}$ for its standard fiber. 
\begin{defi} A map $f:X\to Y$ has  {\em pneumoconnected fibers} if the formula 
\begin{equation}\label{D:pneumofib}
\neg\neg(f^{-1}(y)\cap w=\varnothing\vee f^{-1}(y)\cap w^c=\varnothing) 
\end{equation}
is universally valid, with $y\in Y$ and $w\in P_c(X)$.
\end{defi}
Intuitively, it says that any generic fiber $f^{-1}(y)$ is very close to being connected: Except for the double negation, it reads that fibers cannot be separated through complemented objects.  One cannot rid oneself from the double negation in the definition, since $\neg\neg\alpha\Rightarrow\alpha$ is not universally valid in the internal logic of a non-boolean topos.  Yet to prove an assertion of the form $\neg \alpha$ (e.g. in the previous or in the following two propositions), a \linebreak 

\noindent perfectly valid intuitionistic argument (so long as one refrains from invoking the axiom of choice and the excluded middle) is to assume $\alpha$ and arrive at a contradiction $\perp$, since $\neg\alpha$ is equivalent to $\alpha\Rightarrow\perp$. 

When considering global elements \eqref{D:pneumofib} does capture the connectendess of fibers, as seen in the next few results. 
\begin{proposicion}
Let $\E$ satisfy NS and DQO.  And let $f:X\to Y$ have pneumoconnected fibers. For any point $b:1\to Y$, its fiber $f^{-1}(b)$, given by the pullback diagram

\[\xymatrix{
f^{-1}(b)\ar[r]\ar[d] & X\ar[d]^f \\
1\ar[r]_(.4)b & Y,
}\]
does not have nontrivial complemented subobjects, i.e. $\Pi(f^{-1}(b))=1$.
\end{proposicion}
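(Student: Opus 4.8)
The plan is to obtain the statement from Proposition \ref{P:connect}. Writing $F:=f^{-1}(b)$, note first that $b$ is a global element, hence $b\colon 1\rightarrowtail Y$ is monic and $F\rightarrowtail X$ is a genuine subobject. Since \ref{P:connect} identifies $\Pi F=1$ with the connectedness of $F$, it suffices to show that $F$ has no complemented subobject other than $0$ and $F$ itself; in the inhabited case this yields $\Pi(f^{-1}(b))=1$, while if $F=0$ the assertion ``no nontrivial complemented subobject'' holds vacuously. Following the remark preceding the statement, connectedness being a negative assertion, I would argue by contradiction: assume $V\rightarrowtail F$ is complemented in $F$ with complement $V'$, and that neither $V$ nor $V'$ is $0$. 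By NS there are then points $p\colon 1\to V$ and $q\colon 1\to V'$, which I regard as points of $X$ lying in the fibre, so that $fp=fq=b$ and $p\neq q$.

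The heart of the matter is to feed the decomposition $F=V\sqcup V'$ back into the defining formula of pneumoconnectedness, instantiated at $y:=b$. This is where I expect the main obstacle: $V$ is complemented in $F$ but need not be complemented in $X$ (indeed $F$ itself need not be a complemented subobject of $X$, since the global element $b$ need not cut out a decidable subobject of $Y$), so $V$ is not literally an element of $P_c(X)$ and cannot be substituted for $w$ as it stands. The plan is to produce the required instance by passing to a stage at which the decomposition becomes visible to a complemented subobject of the ambient object --- for instance the stage $F$ itself, where the inclusion $\iota\colon F\to X$ is the generic point of the fibre --- equivalently, to pull $f$ back along $b$ and argue there. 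Verifying that pneumoconnectedness is strong enough to detect the fibre's own complemented subobjects, i.e.\ relating the complemented subobjects of $F$ to the $P_c(X)$ over which the hypothesis quantifies, is the crux of the whole proof.

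Once a complemented datum $w$ realizing $V$ is in hand, the formula delivers $\neg\neg\bigl(F\cap w=\varnothing\vee F\cap w^{c}=\varnothing\bigr)$. Here NS does the remaining work, exactly as in the remark: over an object possessing a global element the double negation collapses and a disjunction of closed formulas is decided by one of its disjuncts, because NS forces $\E(1,\subob)=2$. Hence one genuinely has $F\cap w=\varnothing$ or $F\cap w^{c}=\varnothing$, i.e.\ the decomposition restricts to a trivial one; but $p$ witnesses $V\neq 0$ and $q$ witnesses $V'\neq 0$, so both sides are inhabited --- a contradiction $\perp$. This shows $F$ has no nontrivial complemented subobject, whence \ref{P:connect} gives $\Pi(f^{-1}(b))=1$. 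I expect the genuinely delicate point to be the second paragraph: making precise the passage from a complemented subobject of the fibre to an admissible instance of the variable $w$, and confirming that the $\neg\neg$ is removable precisely because NS becomes available after restricting along the global element $b$.
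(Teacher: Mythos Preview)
Your overall strategy coincides with the paper's: assume a nontrivial complemented subobject $A\rightarrowtail f^{-1}(b)$, use NS to obtain points in $A$ and in its complement, and contradict the pneumoconnectedness hypothesis. The paper, however, does not pause over the issue you single out as ``the crux'': it simply substitutes $A$ for $w$ in formula~\eqref{D:pneumofib} and reads off the contradiction, without commenting on whether $A$ lies in $P_c(X)$ rather than merely in $P_c(f^{-1}(b))$. Your worry is legitimate---a complemented subobject of the fibre need not extend to one of $X$---and your proposed remedy (pull $f$ back along $b$ and instantiate the hypothesis for $F\to 1$, where now $V\in P_c(F)$ is directly admissible) is exactly what is needed; this is the content of Proposition~\ref{L:pullbackofmalongpX}, stated just after the present result and proved independently of it. So your route is the same as the paper's, only made honest at the point where the paper is elliptical.

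One simplification: your last step is more elaborate than required. You do not need to invoke $\E(1,\subob)=2$ to ``collapse'' the double negation. Once you have points in $V$ and in $V'$ you have $\neg(V=\varnothing)$ and $\neg(V'=\varnothing)$, hence $\neg(V=\varnothing\vee V'=\varnothing)$, which already contradicts $\neg\neg(V=\varnothing\vee V'=\varnothing)$ intuitionistically. This is precisely how the paper closes the argument.
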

\proof Let $A\rightarrowtail f^{-1}(b)$ such that $f^{-1}(b)=A+A^c$. If this is a nontrivial decomposition, then both $A$ and $A^c$ are not initial. By NS, there are points $a:1\to A$ and $a':1\to A^c$ such that $f\circ a=f\circ a'=b$. Therefore, $f^{-1}(b)\cap A\neq\varnothing$ and $f^{-1}(b)\cap A^c\neq\varnothing$, which proves that

\[
\neg (f^{-1}(b)\cap A=\varnothing\vee f^{-1}(b)\cap A^c=\varnothing),
\]
which contradicts \eqref{D:pneumofib}.
\endproof

\begin{proposicion}\label{P:pneumoprodfib}
Let $\E$ satisfy NS and DQO. Let $f:X\to Y$ and $g:X'\to Y'$ be two arrows with pneumoconnected fibers.  Then $f\times g$ has pneumoconnected fibers. 
\end{proposicion}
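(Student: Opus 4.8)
The plan is to run the Schanuel cross-slice argument of the earlier proposition fibrewise, but entirely inside the internal logic, with the global points furnished there by NS replaced by double-negated witnesses. First I would identify the generic fibre: for $y\in Y$ and $y'\in Y'$ the fibre $(f\times g)^{-1}(\langle y,y'\rangle)$ is $F_1\times F_2$, where $F_1:=f^{-1}(y)$ and $F_2:=g^{-1}(y')$. Fixing also $w\in P_c(X\times X')$ and writing $W:=w\cap(F_1\times F_2)$ with complement $W^c=w^c\cap(F_1\times F_2)$ inside $F_1\times F_2$, the goal becomes the universal validity of $\neg\neg(W=\varnothing\vee W^c=\varnothing)$.

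Since the target is a double negation, I would prove it by assuming $\neg(W=\varnothing\vee W^c=\varnothing)$ and deriving $\perp$, as sanctioned by the remark preceding the definition. The assumption yields $\neg(W=\varnothing)$ and $\neg(W^c=\varnothing)$, two double-negated existentials; because the goal $\perp$ is $\neg\neg$-stable, these may be stripped to furnish witnesses $\langle a,b\rangle\in W$ and $\langle c,d\rangle\in W^c$, with $a,c\in F_1$ and $b,d\in F_2$.

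Next comes the slicing. For a point $b\in X'$ set $w_b:=\{x\in X:\langle x,b\rangle\in w\}$; a short check shows $w_b\in P_c(X)$ with $w_b^c=\{x:\langle x,b\rangle\in w^c\}$, so the pneumoconnectedness of $f$ applies to $y$ and $w_b$ and gives $\neg\neg(F_1\cap w_b=\varnothing\vee F_1\cap w_b^c=\varnothing)$; moreover $F_1\cap w_b$ and $F_1\cap w_b^c$ are exactly the horizontal $b$-slices of $W$ and $W^c$ (using $b\in F_2$). Dually, for $a\in X$ the vertical slice $w^a:=\{x'\in X':\langle a,x'\rangle\in w\}$ lies in $P_c(X')$, and the pneumoconnectedness of $g$ governs the vertical slices of $W,W^c$. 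Applying the first disjunction at $b$: its left disjunct is impossible, since the $b$-slice of $W$ contains $a$, so (stripping the double negation against the $\perp$-goal) the $b$-slice of $W^c$ is empty, whence $F_1\times\{b\}\subseteq W$. Applying the second at $c$: since the $c$-slice of $W^c$ contains $d$, the $c$-slice of $W$ must be empty, whence $\{c\}\times F_2\subseteq W^c$. The point $\langle c,b\rangle$ then lies in both $W$ and $W^c$, i.e. in $W\cap W^c=\varnothing$, a contradiction that closes the argument; as $y,y',w$ were generic, $f\times g$ has pneumoconnected fibres.

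The main obstacle, and essentially the only content beyond bookkeeping, is keeping the intuitionistic logic honest: justifying that, with $\perp$ as goal, one may successively and with nesting introduce witnesses from the double-negated non-emptiness hypotheses and then assume the relevant disjunct from each double-negated slice disjunction—each strip legitimate precisely because $\perp$ is $\neg\neg$-stable—and verifying that $w_b$ and $w^a$ are genuinely complemented and that intersecting with the fibres commutes with slicing, so that $F_1\cap w_b$ really is the $b$-slice of $W$. Once these points are secured, the cross-slice contradiction goes through exactly as in the global-points version, the difference being that no honest global section is ever extracted.
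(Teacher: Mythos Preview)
Your argument is correct and takes a genuinely different route from the paper's. The paper externalizes: it forms the subobject $R\subseteq Y\times Y'$ of pairs $\langle z,w\rangle$ at which pneumoconnectedness of $f\times g$ could fail, uses NS to say that if $R$ were not initial it would have a global point $\langle a,b\rangle$, and then invokes the two preceding propositions---that the global fibres $f^{-1}(a)$ and $g^{-1}(b)$ are honestly connected (i.e.\ $\Pi(f^{-1}(a))=1=\Pi(g^{-1}(b))$), and Schanuel's result that a product of connected objects is connected---to obtain a contradiction. You instead run the Schanuel cross-slice argument directly in the internal logic, feeding the slices $w_b$ and $w^{a}$ back into the \emph{hypothesis} of pneumoconnectedness for $f$ and $g$ at the generic elements $y,y'$, and exploiting the $\neg\neg$-stability of $\perp$ to peel off each double negation in turn (disjunctive syllogism being intuitionistically valid, so the ``impossible disjunct'' moves are fine). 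The payoff is real: your proof never invokes NS or DQO, nor the earlier two propositions, so it actually establishes the statement in an arbitrary topos. The paper's approach, by contrast, is shorter once that preliminary machinery is in place, and exhibits the recurring template---build the ``bad'' subobject, show it has no global points, conclude by NS that it is initial---used repeatedly in the article. One minor notational hazard in your write-up: you use $w^c$ for the complement of $w$ and then implicitly also for the vertical slice at the point $c$; you should disambiguate.
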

\proof
Define
\[
\vartheta:=(f\times g)^{-1}(\langle z,w\rangle)\cap v\neq\varnothing\wedge(f\times g)^{-1}(\langle z,w\rangle)\cap v^c\neq\varnothing
\]
and
\[
R:=\{\langle z,w\rangle\in Y\times Y':\exists v\in P_c(X\times X').\vartheta\}.
\]
Suppose for contradiction that $R$ is not initial. By the NS, there exist points $a:1\to Y$,  $b:1\to Y'$ and a complemented $D\rightarrowtail X\times X'$ such that
\begin{equation}\label{E:pneumoprod}
(f\times g)^{-1}(\langle a,b\rangle)\cap D\neq\varnothing\wedge(f\times g)^{-1}(\langle a,b\rangle)\cap D^c\neq\varnothing.
\end{equation}

On the other hand, $\Pi(f^{-1}(a))=1=\Pi(g^{-1}(b))$ and thus
\[
(f\times g)^{-1}(\langle a,b\rangle)=f^{-1}(a)\times g^{-1}(b)
\]
is also connected. Therefore, $(f\times g)^{-1}(\langle a,b\rangle)\cap D=\varnothing$ or $(f\times g)^{-1}(\langle a,b\rangle)\cap D^c=\varnothing$, which is a contradiction to \eqref{E:pneumoprod}.  Therefore, $R$ cannot have points and by NS it must be initial. That is that

\[
\neg\exists v\in P_c(X\times X').\vartheta
\]
is universally valid for $\langle z,w\rangle\in Y\times Y'$. Or, equivalently,
\[
\forall v\in P_c(X\times X').\neg\vartheta
\]
is universally valid for $\langle z,w\rangle\in Y\times Y'$.

Or, equivalently,
\[
\neg\neg((f\times g)^{-1}(\langle z,w\rangle)\cap v=\varnothing\vee(f\times g)^{-1}(\langle z,w\rangle)\cap v^c=\varnothing)
\]
is universally valid, with $z\in Y$, $w\in Y'$ and $v\in P_c(X\times X')$, as required by \eqref{D:pneumofib}. 
\endproof
Lastly, another result that can be proved along the same lines is the following. 
\begin{prop}\label{L:pullbackofmalongpX}
Let $\E$  satisfy NS and DQO. Any pullback of an arrow that has pneumoconnected fibers has pneumoconnected fibers.\end{prop}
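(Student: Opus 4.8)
The plan is to mimic the proof of Proposition~\ref{P:pneumoprodfib}: I would describe internally the locus of points of the base over which pneumoconnectedness could fail, assume by contradiction that this locus is inhabited, extract an honest global point via NS, and contradict the connectedness of the corresponding fiber. Concretely, let $f:X\to Y$ have pneumoconnected fibers and let
\[\xymatrix{
P\ar[r]^{\pi}\ar[d]_{f'} & X\ar[d]^f \\
Y'\ar[r]_k & Y
}\]
be a pullback; the goal is to show $f':P\to Y'$ has pneumoconnected fibers.

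The first step, and the one doing the real geometric work, is the identification of fibers. By the pasting lemma for pullbacks, for any point $b':1\to Y'$ the fiber $(f')^{-1}(b')$ is the pullback of $f$ along $k\circ b'$, so that
\[
(f')^{-1}(b')\cong f^{-1}(k\circ b').
\]
This reduces fibers of $f'$ over global points to fibers of $f$ over global points, and the latter are connected by the proposition preceding~\ref{P:pneumoprodfib}.

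With this in hand I would set
\[
\vartheta:=(f')^{-1}(y')\cap w\neq\varnothing\wedge(f')^{-1}(y')\cap w^c\neq\varnothing
\]
and
\[
R:=\{y'\in Y':\exists w\in P_c(P).\vartheta\}.
\]
Assuming $R$ is not initial, NS furnishes a point $b':1\to Y'$ and a complemented $D\rightarrowtail P$ with $(f')^{-1}(b')\cap D\neq\varnothing$ and $(f')^{-1}(b')\cap D^c\neq\varnothing$. But $D$ restricts to a complemented subobject of the connected object $(f')^{-1}(b')\cong f^{-1}(k\circ b')$, forcing one of the two intersections to be empty---a contradiction. Hence $R$ must be initial, and unwinding the quantifiers exactly as in~\ref{P:pneumoprodfib} yields that
\[
\neg\neg((f')^{-1}(y')\cap w=\varnothing\vee(f')^{-1}(y')\cap w^c=\varnothing)
\]
is universally valid for $y'\in Y'$ and $w\in P_c(P)$, which is the required condition.

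The hard part will be making the fiber identification rigorous at the level of the internal language: one must check that the complemented subobject $D\rightarrowtail P$ extracted by NS genuinely restricts, along the monomorphism $(f')^{-1}(b')\rightarrowtail P$, to a complemented subobject of the fiber, and that the isomorphism $(f')^{-1}(b')\cong f^{-1}(k\circ b')$ carries this restriction to a complemented subobject to which the connectedness of $f^{-1}(k\circ b')$ genuinely applies. Everything else is the same formal manipulation of double negations and NS already used in the previous two propositions, which is why the result can indeed be proved ``along the same lines.''
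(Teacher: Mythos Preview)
Your proposal is correct and is exactly the approach the paper has in mind: the paper's entire proof is the word ``Straightforward,'' preceded by the remark that the result ``can be proved along the same lines'' as the two propositions before it. Your worry about the ``hard part'' is overstated---complemented subobjects are stable under pullback in any topos, so the restriction of $D$ to the fiber is automatically complemented, and the fiber identification $(f')^{-1}(b')\cong f^{-1}(k\circ b')$ is just the pasting lemma you already invoked.
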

\begin{proof} Straightforward.
\end{proof}


\section{Fiber Pneumoconnectedness Lemma}

 The purpose of this section is to state and prove the following result. It is central to several arguments in this report. 
 \begin{teorema}[Fiber Pneumoconnectedness Lemma]\label{T:Lemon}
Let $\E$ be a topos that satisfies NS. Let $q:X\twoheadrightarrow Q$ be epic. Then the following statements are equivalent:
\begin{enumerate}[(i)]
 \item\label{I:decquotfact2} Every arrow $X\rightarrow 2$ factors through $q$. \pagebreak
 \item\label{I:PCfibers} The map $q$ has pneumoconnected fibers.
 \item\label{I:decquotfactdec} Every arrow $X\rightarrow Y$ with $Y$ decidable factors through $q$.
\end{enumerate}
\end{teorema}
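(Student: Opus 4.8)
The plan is to prove the three statements equivalent by establishing the cycle \eqref{I:decquotfactdec} $\Rightarrow$ \eqref{I:decquotfact2} $\Rightarrow$ \eqref{I:PCfibers} $\Rightarrow$ \eqref{I:decquotfactdec}, noting in passing that, as the hypotheses demand, only NS (and not DQO) is needed. The first implication is immediate: the object $2=1+1$ is decidable, so \eqref{I:decquotfactdec} applied to $Y=2$ gives \eqref{I:decquotfact2} at once.

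For \eqref{I:decquotfact2} $\Rightarrow$ \eqref{I:PCfibers} I would argue by contradiction, exactly in the style already used for Proposition \ref{P:pneumoprodfib}. The plan is to form the subobject
\[
R := \{\, y \in Q : \exists w \in P_c(X).\ \bigl(q^{-1}(y)\cap w \neq \varnothing \ \wedge\ q^{-1}(y)\cap w^c \neq \varnothing\bigr) \,\} \rightarrowtail Q,
\]
and suppose it is not initial. By NS it then carries a point $b:1\to Q$, witnessed by an honest complemented subobject $w\rightarrowtail X$ with $q^{-1}(b)\cap w\neq\varnothing$ and $q^{-1}(b)\cap w^c\neq\varnothing$. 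By \eqref{I:decquotfact2} the characteristic map $\chi_w:X\to 2$ factors as $\chi_w=\bar w\circ q$; writing $W:=\bar w^{-1}(1)\rightarrowtail Q$ this yields $w=q^{-1}(W)$ and $w^c=q^{-1}(W^c)$. Since NS forces $\Sub_c(1)=2$, the point $b$ lands in exactly one of the complemented pieces $W,W^c$, and whichever it is, the entire fiber $q^{-1}(b)$ sits inside the corresponding $w$ or $w^c$, contradicting one of the two non-emptiness assertions. Hence $R=0$, which is precisely the universal validity of the doubly negated disjunction of \eqref{D:pneumofib}, via the intuitionistic De Morgan equivalence $\neg(\neg P\wedge\neg Q)\equiv\neg\neg(P\vee Q)$ recalled before the definition.

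The substantive implication is \eqref{I:PCfibers} $\Rightarrow$ \eqref{I:decquotfactdec}. Since $q$ is epic and every epimorphism in a topos is the coequalizer of its kernel pair $K\rightrightarrows X$, an arrow $f:X\to Y$ factors uniquely through $q$ precisely when $q(x)=q(x')$ entails $f(x)=f(x')$. I would establish this implication in the internal language: fixing $x,x'$ with $y:=q(x)=q(x')$, I test pneumoconnectedness against the complemented subobject $w:=f^{-1}(f(x))=\{x''\in X:f(x'')=f(x)\}$, which lies in $P_c(X)$ exactly because $Y$ is decidable. As $x\in q^{-1}(y)\cap w$, the disjunct $q^{-1}(y)\cap w=\varnothing$ is refuted, so \eqref{I:PCfibers} collapses to $\neg\neg(q^{-1}(y)\cap w^c=\varnothing)$, and hence to $\neg\neg(f(x')=f(x))$.

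The main obstacle will be exactly this last step: one only obtains the doubly negated equality, and $\neg\neg\alpha\Rightarrow\alpha$ fails in a non-boolean topos. What rescues the argument is again the decidability of $Y$, for equality in a decidable object is a complemented, hence $\neg\neg$-stable, proposition; thus $\neg\neg(f(x')=f(x))$ upgrades to $f(x')=f(x)$, giving $f\pi_1=f\pi_2$ and the desired factorization, which closes the cycle. I expect the only delicate bookkeeping to be the internal handling of the parameter-dependent family $w=f^{-1}(f(x))$, but this is legitimate since the whole derivation takes place in the Mitchell--B\'enabou internal language.
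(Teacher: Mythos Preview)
Your cycle and the first two implications match the paper's proof: \eqref{I:decquotfactdec}$\Rightarrow$\eqref{I:decquotfact2} is trivial, and your argument for \eqref{I:decquotfact2}$\Rightarrow$\eqref{I:PCfibers} is exactly Proposition~\ref{L:(i)->(ii)}, phrased via the factorisation $w=q^{-1}(W)$ rather than via the characteristic map, but with the same contradiction. (Your aside that NS forces $\Sub_c(1)$ to have two elements is correct: any nonzero subobject of $1$ has a point by NS, and a pointed subobject of $1$ is all of $1$.)

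For \eqref{I:PCfibers}$\Rightarrow$\eqref{I:decquotfactdec} you take a genuinely different route. The paper (Proposition~\ref{L:(ii)->(iii)}) builds the candidate graph $G\rightarrowtail Q\times Y$ as the image of $\lvert f\rvert$ under $q\times 1_Y$, then proves functionality by forming the ``bad'' subobject $R\rightarrowtail Q$ of points where uniqueness fails, invoking NS to extract a global witness, and contradicting connectedness of the actual fiber over that global point; decidability is used at the end to strip the double negation from $y=y'$. You instead exploit that in a topos every epi is the coequalizer of its kernel pair, reducing the problem to the internal implication $q(x)=q(x')\Rightarrow f(x)=f(x')$, and test pneumoconnectedness directly against the parameter-dependent complemented subobject $w=f^{-1}(f(x))$. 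Your derivation is sound: from $\neg\neg(P\vee Q)$ and $\neg P$ one gets $\neg\neg Q$ intuitionistically, and decidability of $Y$ makes equality $\neg\neg$-stable. The payoff is that your \eqref{I:PCfibers}$\Rightarrow$\eqref{I:decquotfactdec} never appeals to NS at all---this is precisely the strengthening the paper alludes to in the remark following Proposition~\ref{L:(ii)->(iii)} (``by delving deeper into the internal logic \dots\ it is possible to do away with NS'') but does not carry out. The paper's graph approach is more concrete and stays closer to global elements; yours is shorter and isolates NS as needed only for \eqref{I:decquotfact2}$\Rightarrow$\eqref{I:PCfibers}.
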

The rest of the section is devoted to some of its applications.  Afterwards, a proof of \ref{T:Lemon} will be given towards the \hyperlink{LemonProof}{end of the section}. 

\begin{cor}\label{T:Pipreservesproducts}
Le $\E$ satisfy NS and DQO and let $p_X:X\to \Pi(X)$ be the corresponding quotient map. Then $\Pi(X\times Y)\cong\Pi X\times\Pi Y$.
\end{cor}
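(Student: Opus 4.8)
The plan is to identify $p_X\times p_Y\colon X\times Y\to \Pi X\times\Pi Y$ as a decidable quotient of $X\times Y$ through which every arrow to $2$ factors uniquely, and then to invoke the uniqueness clause of DQO to conclude that it must agree with $p_{X\times Y}\colon X\times Y\to\Pi(X\times Y)$. This immediately yields the desired isomorphism $\Pi(X\times Y)\cong\Pi X\times\Pi Y$, compatible with the quotient maps.

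First I would assemble the two structural ingredients. Since $\Pi X$ and $\Pi Y$ are decidable by construction, their product $\Pi X\times\Pi Y$ is again decidable, as the class of decidable objects is closed under finite products (a product of complemented diagonals has complemented diagonal). Next, $p_X\times p_Y$ is epic: it factors as $(p_X\times 1_{\Pi Y})\circ(1_X\times p_Y)$, each factor being a pullback of an epi and hence epi, and epis compose. In particular any factorization of a map through $p_X\times p_Y$ is automatically unique.

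The heart of the argument is to verify the factorization property, and here I would lean on the Fiber Pneumoconnectedness Lemma \ref{T:Lemon}. By DQO the map $p_X$ is a decidable quotient through which every arrow $X\to 2$ factors, so it satisfies condition \eqref{I:decquotfact2} of \ref{T:Lemon}; hence, by the equivalence, $p_X$ has pneumoconnected fibers, and likewise $p_Y$. Proposition \ref{P:pneumoprodfib} then guarantees that $p_X\times p_Y$ has pneumoconnected fibers, i.e. satisfies condition \eqref{I:PCfibers}. Applying \ref{T:Lemon} once more, now to the epi $p_X\times p_Y$, condition \eqref{I:PCfibers} gives back condition \eqref{I:decquotfact2}: every arrow $X\times Y\to 2$ factors through $p_X\times p_Y$, uniquely by epicness.

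With these pieces in place, $p_X\times p_Y$ is a decidable quotient of $X\times Y$ along which every arrow to $2$ factors uniquely, so the uniqueness in DQO forces it to coincide with $p_{X\times Y}$, establishing the claim. I expect the only genuinely nontrivial step to be the pneumoconnectedness of the product map, and this is supplied wholesale by Proposition \ref{P:pneumoprodfib}; the decidability of $\Pi X\times\Pi Y$, the epicness of $p_X\times p_Y$, and the uniqueness of factorizations are routine.
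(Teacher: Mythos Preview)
Your proof is correct and follows essentially the same route as the paper: use \ref{T:Lemon} to see that $p_X$ and $p_Y$ have pneumoconnected fibers, invoke \ref{P:pneumoprodfib} to get pneumoconnected fibers for $p_X\times p_Y$, and then conclude via the uniqueness clause of DQO that $\Pi X\times\Pi Y$ must be $\Pi(X\times Y)$. You merely spell out in more detail the decidability of $\Pi X\times\Pi Y$ and the epicness of $p_X\times p_Y$, both of which the paper leaves implicit.
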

\begin{proof}
By \ref{T:Lemon} both projections $p_X$ and $p_Y$ have pneumoconnected fibers, hence by \ref{P:pneumoprodfib} so does the epic arrow $p_X\times p_Y:X\times Y\rightarrow\Pi X\times\Pi Y$. By DQO, since $\Pi X\times\Pi Y$ is a decidable quotient that factors arrows to $2$, it coincides with $\Pi (X\times Y)$.
\end{proof}

\proof[Proof of \hyperlink{T:TeorA}{Theorem A}] For an arrow $f:X\rightarrow Y$ with $Y\in\dec(\E)$, by \ref{T:Lemon}\eqref{I:decquotfactdec} there exists a unique $f':\Pi X\rightarrow Y$ such that $f'\circ p_X=f$. So $\Pi$ is functorial and $\Pi\dashv \mathcal I$. By \ref{T:Pipreservesproducts} it also preserves products. 
\endproof

\begin{cor}\label{T:DecReflexImpliesDQO}
Let $\E$ satisfy NS with $\dec(\E)$ reflective. Then DQO holds.
\end{cor}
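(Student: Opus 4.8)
The plan is to build the required quotient directly from the reflector and then call on the Fiber Pneumoconnectedness Lemma (\ref{T:Lemon}) to pin down its uniqueness. I would write $\mathcal I\colon\dec(\E)\hookrightarrow\E$ for the inclusion and let $L\dashv\mathcal I$ be the assumed reflection, with unit $\eta_X\colon X\to L(X)$ and $L(X)$ decidable. The starting remark is that $2=1+1$ is decidable, so the adjunction gives a bijection $\E(X,2)\cong\E(L(X),2)$ implemented by precomposition with $\eta_X$; equivalently, every arrow $X\to 2$ factors uniquely through $\eta_X$. This already handles the factorization-through-$2$ part of DQO, \emph{provided} $\eta_X$ can be taken to be a quotient map.

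So the first real step is to verify that $\eta_X$ is epic. I would factor it as $X\xrightarrow{\,q\,}Q\xrightarrow{\,m\,}L(X)$ through its image. Since subobjects of decidable objects are again decidable, $Q$ is decidable, so the epimorphism $q$ reflects to a unique $s\colon L(X)\to Q$ with $s\eta_X=q$; then $ms\colon L(X)\to L(X)$ fixes $\eta_X$ and must therefore be the identity by the uniqueness in the universal property, forcing the monomorphism $m$ to be an isomorphism. Hence $\eta_X=mq$ is epic, and $\eta_X\colon X\twoheadrightarrow L(X)$ is a decidable quotient through which every arrow to $2$ factors uniquely---the uniqueness now being automatic because $\eta_X$ is epic. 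Taking $\Pi X:=L(X)$ would discharge the existence clause of DQO.

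For the uniqueness clause I would take any competing decidable quotient $q'\colon X\twoheadrightarrow Q'$ through which every arrow $X\to 2$ factors. This $q'$ satisfies hypothesis \eqref{I:decquotfact2} of \ref{T:Lemon}, hence also \eqref{I:decquotfactdec}, so every arrow from $X$ into a decidable object---in particular $\eta_X$---factors through $q'$, say $\eta_X=vq'$. The reflection property gives the reverse factorization $q'=u\eta_X$. Since $\eta_X$ and $q'$ are both epic, $vu$ and $uv$ are identities, so $q'$ is canonically isomorphic to $\eta_X$ as a quotient of $X$, which is exactly the uniqueness required.

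The only nonformal input is the appeal to \ref{T:Lemon} in the last step, and this is the sole place NS is used; the rest is manipulation of the adjunction and the image factorization. I expect the epicness of the unit to be the crux: it is what converts the abstract reflection into a quotient in the sense demanded by DQO, and once it is secured the factorization and uniqueness clauses fall out cleanly.
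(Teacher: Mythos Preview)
Your proof is correct and follows essentially the same route as the paper's: factor the unit through its image and use that subobjects of decidables are decidable to conclude it is epic, then invoke \ref{T:Lemon} for the uniqueness clause. The only cosmetic differences are that the paper applies \ref{T:Lemon}\eqref{I:decquotfactdec} symmetrically to both competing quotients rather than using the reflection for one direction, and records in passing that $p_X$ has pneumoconnected fibers (an observation not strictly needed for DQO itself).
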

\begin{proof}
Let $\Pi\dashv\mathcal I$ be the reflection. Let $p:1\rightarrow \mathcal I \Pi$ be its unit. Since every arrow $X\rightarrow 2$ in $\E$ factors through $p_X$ and thus also through its image. Since the image is also decidable, then it is universal and hence the unit of the adjunction. Whence $p_X$ is epic. Thus there exists a decidable quotient that factors arrows to $2$. By \ref{T:Lemon}, $p_X$ has pneumoconnected fibers. 

To verify uniqueness, let $q:X\twoheadrightarrow Q$ and $q':X\twoheadrightarrow Q'$ with $Q$ and $Q'$ decidable be two quotients satisfying the factorization property of DQO.  Then, by \eqref{I:decquotfactdec}, there are arrows $Q\rightarrow Q'$ and $Q'\rightarrow Q$ which are necessarily inverses of each other. Thus one verifies DQO.
\end{proof}
In his context, McLarty \cite{MR0877866} proves that $\dec(\E)$ is actually a topos (see \citet{MR3893295} for some generalizations). 

The following result also invokes \ref{T:Lemon} and provides necessary and sufficient conditions for $\dec(\E)$ to be a topos. 
\begin{cor}\label{T:TeorD}  
Let $\E$ be a nondegenerate topos satisfying NS and {\em DQO}. The category $\dec(\E)$ is a topos if and only if the arrow $\Pi(f)$ is epic for every $\neg\neg$-dense arrow $f$.
\end{cor}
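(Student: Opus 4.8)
The plan is to isolate the single topos axiom that can fail and reduce the whole biconditional to a Boolean dichotomy. Since $\dec(\E)$ is closed in $\E$ under finite limits (the terminal object is decidable, products of decidables are decidable, and every subobject of a decidable object is again decidable) and is an exponential ideal by \hyperlink{T:TeorB}{Theorem B}, it is already finitely complete and cartesian closed; the only structure that can be missing is a subobject classifier. I would therefore route everything through the intermediate condition
\[
\text{(b)}\qquad\text{every subobject of a decidable object is complemented,}
\]
proving (topos)$\Rightarrow$(b)$\Rightarrow$(density)$\Rightarrow$(b)$\Rightarrow$(topos), where (density) is the stated condition that $\Pi(f)$ be epic for every $\neg\neg$-dense $f$.

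First I would record the elementary fact that a topos in which every object is decidable is Boolean: its classifier is then decidable, so the diagonal of $\Omega$ is complemented, and specializing the resulting decidable equality $p=q\vee\neg(p=q)$ to $q=\top$ yields $p\vee\neg p$ for every truth value. Every object of $\dec(\E)$ is decidable in $\dec(\E)$, since the complements of the diagonals are subobjects of decidable objects and hence again decidable, so the coproduct decompositions transfer into $\dec(\E)$. Thus if $\dec(\E)$ is a topos it is Boolean, which is exactly (b). Conversely, (b) makes $2=1+1$ a subobject classifier for $\dec(\E)$: subobjects of a decidable object are all complemented, and complemented subobjects are classified by $2$; combined with the first paragraph this shows $\dec(\E)$ is a topos. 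Hence (topos)$\Leftrightarrow$(b).

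For (density)$\Rightarrow$(b), take a subobject $A\rightarrowtail X$ of a decidable $X$ and consider $A\vee\neg A\rightarrowtail X$. A direct Heyting computation gives $\neg(A\vee\neg A)=\neg A\wedge\neg\neg A=\bot$, so this inclusion is $\neg\neg$-dense; moreover $A\vee\neg A$, being a subobject of a decidable object, is decidable, and $\Pi$ acts as the identity on decidable objects and their maps. The hypothesis therefore forces the dense mono $A\vee\neg A\rightarrowtail X$ itself to be epic, hence an isomorphism, so $A$ is complemented with complement $\neg A$.

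For (b)$\Rightarrow$(density) — which I expect to be the main obstacle — I would first use that $\Pi$ preserves epimorphisms (it is a left adjoint by \hyperlink{T:TeorA}{Theorem A}) to factor a $\neg\neg$-dense arrow through its image and reduce to a $\neg\neg$-dense mono $m\colon S\rightarrowtail Y$; the goal is that $\Pi(m)$ be epic. Let $C\rightarrowtail\Pi Y$ be the image of $\Pi(m)$. Since $\Pi Y$ is decidable, (b) makes $C$ complemented. Naturality of the unit gives $p_Y\circ m=\Pi(m)\circ p_S$, so $m$ factors through $p_Y^{-1}(C)$ and $S\le p_Y^{-1}(C)$ in $\Sub(Y)$. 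The subobject $p_Y^{-1}(C)$ is complemented, hence $\neg\neg$-closed, and it contains the $\neg\neg$-dense $S$, so it is all of $Y$; thus $p_Y$ factors through $C\rightarrowtail\Pi Y$, and as $p_Y$ is epic by DQO the mono $C\rightarrowtail\Pi Y$ is epic and therefore an isomorphism, i.e. $\Pi(m)$ is epic. The delicate points are the bookkeeping that reduces an arbitrary $\neg\neg$-dense arrow to a dense mono and the repeated identification of complemented subobjects with $\neg\neg$-closed ones; this is precisely where NS and DQO enter, through the reflection of \hyperlink{T:TeorA}{Theorem A} (itself resting on \ref{T:Lemon}) and the epimorphy of the quotient maps $p_Y$.
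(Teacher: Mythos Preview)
Your argument is correct and, in organization, cleaner than the paper's. Both proofs ultimately pivot on the Boolean condition you call (b), but the paper never isolates it. For (density)$\Rightarrow$(topos) the paper takes an arbitrary mono $m\colon B\rightarrowtail\Pi X$, pulls back $m$ and its pseudocomplement $m^c$ along $p_X$ to obtain $R$ and $R^c$, invokes \ref{L:pullbackofmalongpX} to identify $\Pi R\cong B$ and $\Pi R^c\cong B^c$, observes that $R+R^c\rightarrowtail X$ is $\neg\neg$-dense, and then chases a sizeable cube to conclude that $\Pi r$ is monic and hence $B$ complemented; your $A\vee\neg A$ device sidesteps both the diagram chase and the appeal to \ref{L:pullbackofmalongpX}. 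For the converse the paper simply cites Acu\~na~Ortega--Linton to get $\dec(\E)$ Boolean and then asserts in one line that $p_Y\circ f$ dense forces it (hence $\Pi(f)$) to be epic; your image-and-complement argument makes that inference explicit. Your route is more elementary and self-contained, while the paper's exposes more of how $\Pi$ interacts with pullbacks of complemented subobjects---structure that is reused elsewhere in the paper, e.g.\ in \S\ref{S:preco}.
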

\pagebreak
\proof
Proposition VI.1 in \cite{MR1300636} establishes several equivalences for a topos to be boolean. Among which is that every subobject is complemented. Also, that the operator $\neg\neg$ is the identity, i.e. there are no nontrivial dense subobjects. 

Suppose that $\Pi(f)$ is epic for every $\neg\neg$-dense arrow $f:A\rightarrow X$. Now, let $m:B\rightarrowtail\Pi X$ be a monic arrow. Consider the following pullback diagrams:
\[\xymatrix{
R\ar@{(>->}[r]^(.55){p_X^{-1}(m)}\ar@{->>}[d] & X\ar@{->>}[d]^{p_X} &  P\ar@{(>->}[l]_{p_X^{-1}(m^c)}\ar@{->>}[d] \\
B\ar@{(>->}[r]_m & \Pi X &  B^c\ar@{(>->}[l]^(.45){m^c}
}\]
Since inverse images preserve pseudocomplements, $p_X^{-1}(m^c)\cong p_X^{-1}(m)^c$, without loss of generality $P=R^c$, and, by \ref{L:pullbackofmalongpX}, $B\cong\Pi R$ and $B^c\cong\Pi R^c$.

Now, as $r:R+R^c\rightarrowtail X$ is $\neg\neg$-dense, $\Pi(r)$ is epic. Consider the following commutative diagram:
\[\xymatrix{
& R\ar@{(>->}[rrr]\ar[dd]^(.65){p_R}|(.49)\hole & & & R+R^c\ar@{(>->}[dl]^r\ar@{->>}[dd]|\hole^(.65){p_{R+R^c}} & & & R^c\ar@{(>->}[lll]\ar@{->>}[dd]^{p_{R^c}} \\
R\ar@{}@<-1.45ex>[ur]^[@!43]{=}\ar@{(>->}[rrr]^(.6){p_X^{-1}(m)}\ar@{->>}[dd] & & & X\ar@{->>}[dd]^(.65){p_X} & & & R^c\ar@{(>->}[lll]_(.4){p_X^{-1}(m^c)}\ar@{}@<-1.45ex>[ur]^[@!43]{=}\ar@{->>}[dd] & \\
& \Pi R\ar@{(>->}[rrr]|(.54)\hole & & & \Pi R+\Pi R^c\ar@{->>}[dl]^{\Pi r} & & & \Pi R^c\ar@{(>->}[lll]|(.34)\hole \\
B\ar@{}@<-1.45ex>[ur]^[@!43]{\cong}\ar@{(>->}[rrr]_m & & & \Pi X & & & B^c\ar@{(>->}[lll]^{m^c}\ar@{}@<-1.45ex>[ur]^[@!43]{\cong} &
}\]
Therefore, as $\Pi R^c\cong B^c$ and accordingly $\Pi R+\Pi R^c\cong\Pi R\cup\Pi R^c$, then $\Pi r$ is monic. So $B$ is complemented, and thus $\dec(\E)$ is a topos with $2$ as its subobject classifier.

Conversely, suppose $\dec(\E)$ is a topos, then it must be boolean (see Acu\~{n}a Ortega and Linton\cite[Observation 2.6]{MR0555540}). Let $f:X\to Y$ be $\neg\neg$-dense, since the composition of $\neg\neg$-dense arrows is $\neg\neg$-dense, it follows that $\Pi(f)\circ p_X=p_Y\circ f$ is dense. Hence it must be epic. 
\endproof

The following result shows that the property of having pneumoconnected fibers is also present in the canonical map from an object to its sheafification. 
\begin{cor}\label{T:negnegsep}
Let $\E$ be topos satisfying NS and let $m_{\neg\neg}^X:X\twoheadrightarrow M_{\neg\neg}X$  be the reflector of the inclusion of category of $\neg\neg$-separated objects of $\E$. Then $m_{\neg\neg}$  has pneumoconnected fibers. Consequently, the sheafification functor also has pneumoconnected fibers\footnote{In the presence of NS, this suggests there ought to be a description that characterizes the behavior of the fibers of $m^X_j:X\rightarrow M_jX$ for an arbitrary local operator $j$, which might then provide a description for the required behavior of the fibers of the unit of $f^*f_!$ of an arbitrary precohesion $f$. Nothing thus far eases the work required to syntactically describe the image of $f^*$.}. 
\end{cor}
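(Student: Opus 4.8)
The plan is to reduce the first assertion to the Fiber Pneumoconnectedness Lemma \ref{T:Lemon}. It suffices to check that $m^X_{\neg\neg}$ is epic and that every arrow $X\to 2$ factors through it; the implication \eqref{I:decquotfact2}$\Rightarrow$\eqref{I:PCfibers} then yields the claim. That $m^X_{\neg\neg}$ is epic is immediate, since the $\neg\neg$-separated reflection is the quotient of $X$ by the $\neg\neg$-closure of its diagonal, hence a regular epimorphism. For the factorization I would note that $2=1+1$ is decidable, so its diagonal $\Delta_2\rightarrowtail 2\times 2$ is complemented and therefore $\neg\neg$-closed; thus $2$ is $\neg\neg$-separated. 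The universal property of the reflector then makes every arrow $X\to 2$ factor uniquely through $m^X_{\neg\neg}$. With both hypotheses of \ref{T:Lemon} in place, $m^X_{\neg\neg}$ has pneumoconnected fibers.

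For the consequence I would factor the sheafification unit through the separated reflection as $X\xrightarrow{m^X_{\neg\neg}} M_{\neg\neg}X\xrightarrow{\,i\,} a_{\neg\neg}X$, where $i$ is a $\neg\neg$-dense monomorphism. Since $i$ is monic, for a generic $y\in a_{\neg\neg}X$ the fiber $\eta_X^{-1}(y)$ is empty unless $y$ lies in the image of $i$, in which case $y=i(y')$ for a unique $y'\in M_{\neg\neg}X$ and $\eta_X^{-1}(y)=(m^X_{\neg\neg})^{-1}(y')$. Writing $\varphi(y)$ for the disjunction occurring under the double negation in \eqref{D:pneumofib}, the first assertion thus supplies the implication $(y\in\image i)\Rightarrow\neg\neg\varphi(y)$, while the empty-fiber case makes $\varphi(y)$ hold outright.

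To conclude internally, without invoking the excluded middle, I would exploit that $\neg\neg$-density of $i$ is exactly the universal validity of $\neg\neg(y\in\image i)$ for $y\in a_{\neg\neg}X$. Applying double negation to the implication above and using $\neg\neg\neg\neg\varphi\Leftrightarrow\neg\neg\varphi$ gives $\neg\neg(y\in\image i)\Rightarrow\neg\neg\varphi(y)$, whence $\neg\neg\varphi(y)$ is universally valid, which is \eqref{D:pneumofib} for $\eta_X$. The main obstacle lies precisely here: the sheafification unit is \emph{not} epic---it is a $\neg\neg$-dense mono following a surjection---so \ref{T:Lemon} cannot be applied to it directly, and one must transfer pneumoconnectedness across $i$ entirely within the internal logic, where the disjunction $\varphi$ is available only under a double negation and the case split on membership in $\image i$ must be mediated by $\neg\neg$-density rather than by any decidable alternative.
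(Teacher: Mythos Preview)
Your reduction for the first assertion is exactly the paper's: its proof is the single line ``Immediate from \ref{T:Lemon}, since every decidable object is $\neg\neg$-separated,'' which is your observation that $2$ (indeed any decidable object) is $\neg\neg$-separated, so the reflector $m_{\neg\neg}^X$ satisfies hypothesis \eqref{I:decquotfact2} of \ref{T:Lemon} and hence has pneumoconnected fibers.

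For the ``consequently'' clause the paper gives no further argument, whereas you supply one. Your transfer across the dense mono $i:M_{\neg\neg}X\rightarrowtail a_{\neg\neg}X$ is correct: from $(y\in\image i)\Rightarrow\neg\neg\varphi(y,w)$ one gets $\neg\neg(y\in\image i)\Rightarrow\neg\neg\varphi(y,w)$ intuitionistically (via $\neg\neg\neg\neg\Leftrightarrow\neg\neg$), and $\neg\neg$-density of $i$ then yields \eqref{D:pneumofib} for $\eta_X$. The empty-fiber remark is correct but, as your final paragraph shows, superfluous once everything is pushed under $\neg\neg$. Two small comments: first, the identification $\eta_X^{-1}(y)=(m_{\neg\neg}^X)^{-1}(y')$ under the hypothesis $i(y')=y$ uses only that $i$ is monic, so it is available internally and the existential on $y'$ can be discharged since the conclusion $\neg\neg\varphi(y,w)$ does not mention $y'$; second, your worry that \ref{T:Lemon} cannot be applied directly to $\eta_X$ is well placed---although \ref{L:(i)->(ii)} itself does not require the map to be epic, one would still need arrows $X\to 2$ to factor through $\eta_X$, and $2$ is $\neg\neg$-separated but not a $\neg\neg$-sheaf in general, so your detour through $i$ is genuinely needed to justify what the paper leaves to the word ``consequently.''
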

\begin{proof} Immediate from \ref{T:Lemon}, since every decidable object is $\neg\neg$-separated.
\end{proof}

 \hypertarget{LemonProof}
The proof of \ref{T:Lemon} is split into the next two results. 
\begin{proposicion}\label{L:(i)->(ii)}
Let $\E$ be a topos satisfying NS. Let $q:X\rightarrow Q$ be such that for every arrow $f:X\rightarrow 2$ there is an arrow $f':Q\rightarrow 2$ making the following diagram commutative:
\[\xymatrix{
X\ar[dr]^f\ar[d]_q & \\
Q\ar[r]_{f'} & 2.
}\]
Then $q$ has pneumoconnected fibers.
\end{proposicion}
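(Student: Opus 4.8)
The plan is to prove that $q$ has pneumoconnected fibers by showing that the subobject of $Q\times P_c(X)$ on which the separation formula can fail is initial, and then to use NS to test initiality on global points. Write $\beta(y,w)$ for the disjunction $q^{-1}(y)\cap w=\varnothing\vee q^{-1}(y)\cap w^c=\varnothing$ of \eqref{D:pneumofib}, let $u\rightarrowtail Q\times P_c(X)$ be the subobject on which $\beta$ holds, and set $N:=\neg u$, the subobject on which $\neg\beta$ holds. In the Heyting algebra $\Sub(Q\times P_c(X))$ one has $\neg\neg u=\top$ iff $\neg u=\bot$, so the conclusion that $\neg\neg\beta$ is universally valid is equivalent to $N=0$. (If $\E$ is degenerate every formula is universally valid and the statement is vacuous, so I assume $0\neq 1$ throughout.)

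First I would reduce $N=0$ to the absence of global points: by NS, $N$ is either initial or admits a point, so it suffices to derive a contradiction from a hypothetical $\langle y,w\rangle:1\to N$. Such a point supplies a global element $y:1\to Q$ together with the complemented subobject $W\rightarrowtail X$ named by $w:1\to P_c(X)$ under the correspondence $\E(1,P_c(X))\cong\Sub_c(X)$, at which $\neg\beta$ is valid. Using the intuitionistic equivalence $\neg(p\vee q)\Leftrightarrow\neg p\wedge\neg q$, this gives that both $\neg(q^{-1}(y)\cap W=\varnothing)$ and $\neg(q^{-1}(y)\cap W^c=\varnothing)$ are valid at $1$. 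Since a proposition and its negation cannot both be valid at $1$ in a nondegenerate topos, neither $q^{-1}(y)\cap W$ nor $q^{-1}(y)\cap W^c$ is the zero subobject of $X$; NS then produces honest points $a:1\to q^{-1}(y)\cap W$ and $a':1\to q^{-1}(y)\cap W^c$, both of which, read as points of $X$, satisfy $q\circ a=y=q\circ a'$.

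The contradiction then comes from the factorization hypothesis. As $W$ is complemented it is classified by some $\chi_W:X\to 2$, and the inclusions $a\in W$, $a'\in W^c$ mean that $\chi_W\circ a$ and $\chi_W\circ a'$ are the two distinct points of $2$. Applying the hypothesis to $f=\chi_W$ yields $f':Q\to 2$ with $\chi_W=f'\circ q$, whence $\chi_W\circ a=f'\circ q\circ a=f'\circ y=f'\circ q\circ a'=\chi_W\circ a'$. This equates the two distinct points of $2$, contradicting nondegeneracy. Hence $N$ has no point, so $N=0$ and $q$ has pneumoconnected fibers.

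I expect the main obstacle to be the logical bookkeeping that bridges the internal double negation and the external reasoning licensed by NS: namely, justifying that $\neg\neg\beta$ is universally valid exactly when $N=0$, and that validity at $1$ of $\neg(q^{-1}(y)\cap W=\varnothing)$ externalizes to $q^{-1}(y)\cap W\neq 0$, so that NS may be invoked to extract the points $a$ and $a'$. Once these two points with a common $q$-image are available, the contradiction obtained by factoring $\chi_W$ through $q$ is immediate.
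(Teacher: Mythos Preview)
Your argument is correct and follows essentially the same route as the paper's proof: both define the subobject on which the pneumoconnectedness formula fails (the paper takes $R\subseteq Q$ with an internal existential over $P_c(X)$, while you take $N\subseteq Q\times P_c(X)$ directly), invoke NS to extract a global point of $Q$ together with a complemented $W\rightarrowtail X$, and then derive a contradiction by factoring the classifying map $\chi_W:X\to 2$ through $q$. Your treatment of the intuitionistic bookkeeping ($\neg\neg u=\top\Leftrightarrow N=0$, the De~Morgan step, and the externalization of $\neg(S=\varnothing)$) is more explicit than the paper's, but the underlying idea is identical.
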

\begin{proof}
Define
\[
R:=\{z\in Q:\exists v\in P_c(X)((q^{-1}(z)\cap v\neq\varnothing)\wedge(q^{-1}(z)\cap v^c\neq\varnothing))\}.
\]
Suppose for contradiction that $R$ is not initial. Then, there is a point $a:1\to Q$ and a complemented $A\rightarrowtail X$ such that
\begin{equation}
q^{-1}(a)\cap A\neq\varnothing\wedge q^{-1}(a)\cap A^c\neq\varnothing.
\end{equation}

For $A$ corresponds an arrow $\xi:X\to 2$. Let $\xi':Q\to 2$ be such that $\xi=\xi' q$. That means that $\xi' a$ must be both $0!$ and $1!$. A contradiction.  Therefore, $R$ is initial.  This means that 
\[
\forall v\in P_c(X)\neg((q^{-1}(z)\cap v\neq\varnothing)\wedge(q^{-1}(z)\cap v^c\neq\varnothing)
\]
\pagebreak 

\noindent is universally valid for $z\in Q$, which in turn means that
\[
\neg\neg((q^{-1}(z)\cap v=\varnothing)\vee(q^{-1}(z)\cap v^c=\varnothing))
\]
is universally valid for $z\in Q$ and $v\in P_c(X)$, as promised. 
\end{proof}
\begin{proposicion}\label{L:(ii)->(iii)}
Let $\E$ be a topos satisfying NS and $q:X\twoheadrightarrow Q$ be epic with pneumoconnected fibers. Every arrow $X\rightarrow Y$ with $Y$ decidable factors through $q$.
\end{proposicion}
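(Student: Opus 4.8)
The plan is to reduce the factorization to a statement about fibres and then let NS do the work. Recall that in a topos every epimorphism is the coequalizer of its own kernel pair, so if $(k_1,k_2)\colon K\rightrightarrows X$ denotes the kernel pair of $q$, then $f$ factors (necessarily uniquely) through $q$ if and only if $f k_1=f k_2$. The decidability of $Y$ is precisely what converts this equality into something NS can attack: since the diagonal of $Y$ is complemented, the equalizer of the pair $f k_1,f k_2\colon K\to Y$ is a complemented subobject of $K$, so $K$ decomposes as $A+E$, where $A$ is the agreement locus $[\,f k_1=f k_2\,]$ and $E$ is the disagreement locus $[\,\neg(f k_1=f k_2)\,]$. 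Consequently $f k_1=f k_2$ holds exactly when $E=\varnothing$, and it suffices to prove that $E$ is initial.

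I would argue by contradiction, in the spirit of the preceding fibrewise propositions. Suppose $E\neq\varnothing$. By NS the object $E$ has a global point, and pushing it forward along $k_1,k_2$ produces two points $x_0,x_0'\colon 1\to X$ with $q x_0=q x_0'=:a$ (they lie in the kernel pair) and $f x_0=:b\neq b':=f x_0'$ (they lie in the disagreement locus). Since $Y$ is decidable the singleton $\{b\}\rightarrowtail Y$ is complemented, so its inverse image $w:=f^{-1}(b)$ is a complemented subobject of $X$, i.e. a global element of $P_c(X)$. Now $x_0$ witnesses $q^{-1}(a)\cap w\neq\varnothing$, because $q x_0=a$ and $f x_0=b$; and $x_0'$ witnesses $q^{-1}(a)\cap w^{c}\neq\varnothing$, because $q x_0'=a$ while $f x_0'=b'$ satisfies $\neg(b'=b)$, so $x_0'\in w^{c}$. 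Hence
\[
\neg\bigl(q^{-1}(a)\cap w=\varnothing \ \vee\ q^{-1}(a)\cap w^{c}=\varnothing\bigr)
\]
is valid, which collides head-on with the instance of \eqref{D:pneumofib} at $y=a$ and this $w$, since that instance asserts the double negation of the very same disjunction. Therefore $E=\varnothing$, so $f k_1=f k_2$, and $f$ factors through $q$.

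The only step that requires genuine care is the bookkeeping that turns ``$f$ does not factor'' into a non-initial object suitable for NS. This is where both standing hypotheses on the data are spent: effectiveness of the epimorphism $q$ reduces factorization to agreement on the kernel pair, and the decidability of $Y$ is used twice over—first to realize the disagreement locus $E$ as a genuine (complemented) summand of $K$, so that non-factorization makes $E$ non-initial, and second to guarantee that $w$ and its complement $w^{c}$ are complemented, as \eqref{D:pneumofib} demands. Once the two points $x_0,x_0'$ are in hand, the remainder is the same purely intuitionistic clash between $\neg\varphi$ and $\neg\neg\varphi$ already exploited in the earlier propositions on pneumoconnected fibres.
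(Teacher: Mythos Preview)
Your argument is correct, but it follows a different route from the paper's. The paper builds the factoring arrow $f'$ explicitly through its graph
\[
G=\{\langle z,y\rangle:\exists x(\langle x,z\rangle\in\lvert q\rvert\wedge\langle x,y\rangle\in\lvert f\rvert)\}\rightarrowtail Q\times Y,
\]
and then shows $G$ is functional: existence in $y$ comes from $q$ being epic, and uniqueness is obtained by defining $R=\{z\in Q:\exists\langle y,y'\rangle\in\Delta_Y^c.\langle z,y\rangle\in G\wedge\langle z,y'\rangle\in G\}$, applying NS to $R$, and reaching exactly the same clash with pneumoconnectedness that you reach. After $R$ is shown to be initial, the paper still has to run a short chain of intuitionistic manipulations (contrapositive plus a second use of decidability to collapse $\neg\neg(y=y')$ to $y=y'$) to extract the uniqueness clause.

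You instead invoke the effectiveness of epimorphisms in a topos and reduce factorization to $fk_1=fk_2$ on the kernel pair $K$; decidability of $Y$ makes the disagreement locus $E\rightarrowtail K$ a complemented summand, and NS on $E$ produces the same two points $x_0,x_0'$ and the same complemented $w=f^{-1}(b)$ that the paper finds. The contradiction with \eqref{D:pneumofib} is identical. What your route buys is a cleaner end-game: once $E=\varnothing$ you are done by the coequalizer universal property, with no residual internal-logic bookkeeping. What the paper's route buys is an explicit description of $f'$ via its graph and a presentation that stays entirely inside the Kripke--Joyal style used throughout the section, avoiding the appeal to the exactness property of the topos.
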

\begin{proof}

Given an arrow $f:X\rightarrow Y$ with $Y$ decidable,  let $\lvert f\rvert\rightarrowtail X\times Y$ and $\lvert q\rvert\rightarrowtail X\times Q$ be the graphs of $f$ and $q$ respectively. The goal is to find $f'$ through it graph. Consider the subobject  
\[
G=\{\langle z,y\rangle:\exists x(\langle x,z\rangle\in\lvert q\rvert\wedge\langle x,y\rangle\in\lvert f\rvert)\}\rightarrowtail Q\times Y.
\]
Since $q$ is epic,
\begin{equation}\label{E:pneumoepic}
\exists y(\langle z,y\rangle\in G).
\end{equation}
is universally valid for $z\in Q$.  To see that $G$ is indeed the graph of a function $f'$---as per \eqref{E:graph}---, what remains to verify is uniqueness in $y$.  Let 
\[
R:=\{z\in Q: \exists \langle y, y'\rangle\in \Delta_Y^c.\langle z, y\rangle\in G\wedge \langle z, y'\rangle\in G \}
\]
Assume for contradiction that $R$ is not initial. Then there are points $a:1\to Q$, $b,c:1\to Y$ such that $b$ is distinct from $c$ and such that $\langle a, b\rangle$ and $\langle a, c\rangle$ factor through $G$. That means that there exists points $d:1\to q^{-1}(a)\cap f^{-1}(b)$ and $e:1\to q^{-1}(a)\cap f^{-1}(c)$.  

Since $Y$ is decidable, $b$ complemented and thus so is $f^{-1}(b)$.  As $f^{-1}(c)$ is a subobject of $f^{-1}(b)^c$, $(q^{-1}(a)\cap(f^{-1}(b))^c\neq\varnothing)$.  This means that $q^{-1}(a)$ would not be connected. A contradiction. 

Therefore, by NS, $R$ must be initial.  Thus,
 \[
\neg\exists \langle y, y'\rangle\in \Delta_Y^c.\langle z, y\rangle\in G\wedge \langle z, y'\rangle\in G
\]
is universally valid for $z\in Q$. Wherefrom,
\[
\langle y, y'\rangle\in \Delta_Y^c\Rightarrow\neg(\langle z, y\rangle\in G\wedge \langle z, y'\rangle\in G)
\]
\pagebreak

\noindent is universally valid for $z\in Q$ and $\langle y, y'\rangle\in Y\times Y$. By contrapositive, 
\[
(\langle z, y\rangle\in G\wedge \langle z, y'\rangle\in G)\Rightarrow \neg\neg(y=y')
\]
is universally valid for $z\in Q$ and $\langle y, y'\rangle\in Y\times Y$, since $\alpha$ always implies $\neg\neg\alpha$. But using the decidability once more, 
\[
(\langle z, y\rangle\in G\wedge \langle z, y'\rangle\in G)\Rightarrow y=y'
\]
is universally valid for $z\in Q$ and $\langle y, y'\rangle\in Y\times Y$. Which yields uniqueness.  Therefore, $G$ is the graph of an arrow $f':Q\to Y$ that factors $f$.
 \end{proof}
By delving  deeper into the internal logic of the topos it is possible to do away with NS in the previous proof, yet this would go beyond the present purposes.

\begin{proof}[Proof of the Fiber Pneumoconnectedness Lemma \ref{T:Lemon}] 
\ref{L:(i)->(ii)} yields \eqref{I:decquotfact2}$\Rightarrow$\eqref{I:PCfibers}, \linebreak \ref{L:(ii)->(iii)} yields \eqref{I:PCfibers}$\Rightarrow$\eqref{I:decquotfactdec}. Finally \eqref{I:decquotfactdec}$\Rightarrow$\eqref{I:decquotfact2} is trivial since $2$ is decidable.
\end{proof}


\section{Precohesiveness}\label{S:preco}
Recall that a topos $\E$ is precohesive over a topos $\mathcal S$ if there is a string of adjunctions
\begin{equation}\label{E:preco}
f_!\dashv f^*\dashv f_*\dashv f^!:\E\rightarrow\mathcal S
\end{equation}
such that $f^*$ is fully faithful, $f_!$ preserves finite products, and that the counit $f^*f_*\rightarrow 1$ is monic (See Lawvere and Menni \cite[Lemma 3.2]{MR3365705}).

From  \hyperlink{T:TeorA}{Theorem A} and from the results in \cite{MR0877866} it is evident that NS + DSO + DQO yields that $\E$ is precohesive over $\dec(\E)$.

To provide a converse in the presence of NS, let $f$ be as in \eqref{E:preco} over a boolean base.  It is proved in \ref{T:DecReflexImpliesDQO} that DQO holds on $\E$.  

Since this means in particular that the unit $\sigma:1\to f^\ast f_!$ is epic, by \cite[Proposition 2.2]{MR3365705}) this is equivalent to the counit $\beta:f^\ast f_\ast\rightarrow 1$ being monic. To verify the uniqueness in DSO, let $g:A\rightarrowtail X$  with $A$ decidable have the same factoring property for arrows from $1$. Then there is a unique arrow $g':A\rightarrowtail f^\ast f_\ast X$ such that the following diagram commutes:
\[\xymatrix{
A\ar@{(>->}[dr]^g\ar@{(>->}[d]_{g'} & \\
f^\ast f_\ast X\ar@{(>->}[r]_(.6){\beta_X} & X
}\]
It remains to see that $g'$ is epic.  By virtue of \ref{T:TeorD}, it suffices to verify that it is dense, since then 
\[\xymatrix{
A\ar[r]^{g'}\ar[d]_\cong^{\sigma_A} & f_\ast X\ar[d]_{\sigma_{f_\ast X}}^\cong \\
f_{!}A\ar@{->>}[r]_{f_{!}g'} & f_{!}f_\ast X
}\]
$f_!g'$ is epic and thus so is $g'$. 

To this effect, notice that by NS, exactly one of the following is universally valid: $f^\ast f_\ast X\cap(A^{\neg\neg})^c=\varnothing$ or $\neg(f^\ast f_\ast X\cap(A^{\neg\neg})^c=\varnothing)$, since neither has free variables and are thus interpreted as points in $\Omega$. Assuming for contradiction the latter holds, there exists a global element $a:1\to f^\ast  f_\ast X\cap(A^{\neg\neg})^c$. But since $f^\ast f_\ast X\cap(A^{\neg\neg})^c\rightarrowtail X$, $a$ factors through $A$. That is,
\[
a\in A\wedge\neg(a\in A)
\]
would hold---a contradiction. Therefore,  $f_\ast X\cap(A^{\neg\neg})^c=\varnothing$ holds. Whence $A$ is $\neg\neg$-dense in $f_\ast X$. This finishes the proof of \hyperlink{T:TeorC}{Theorem C}.\qed


\end{document}